\documentclass{ifacconf}

\usepackage{mathtools}
\usepackage{graphicx}      % include this line if your document contains figures
\usepackage{natbib}        % required for bibliography
\usepackage{amsmath,amssymb,amsfonts}
\usepackage{mathrsfs}
\usepackage{array}
\usepackage{graphicx}
\usepackage{textcomp}
\usepackage{xcolor}
 \usepackage{enumerate}

\usepackage{algorithm}
\usepackage{algorithmic}
\newtheorem{assumption}{Assumption}
\newtheorem{theorem}{Theorem}
\newtheorem{lemma}{Lemma}
\newtheorem{remark}{Remark}

\newcommand{\ADDBoYou}[1]{\textcolor{black}{{#1}}}
\newcommand{\ADDBoYu}[1]{\textcolor{black}{{#1}}}
\begin{document}
\begin{frontmatter}
%\title{Distributed Consensus  Mixed-Integer Optimization via Mix-CALADIN}
\title{Mix-CALADIN: A Distributed Algorithm for Consensus Mixed-Integer Optimization}
%\title{Distributed Mixed-Integer Optimization via Mix-CALADIN\thanksref{footnoteinfo}} 
% Title, preferably not more than 10 words.

% \thanks[footnoteinfo]{Sponsor and financial support acknowledgment
% goes here. Paper titles should be written in uppercase and lowercase
% letters, not all uppercase.}
 \thanks{$^\dagger$ Corresponding author}
 \thanks[footnoteinfo]{The work of B.H., X.D., and A.I.R. was supported by the Guangzhou-HKUST(GZ) Joint Funding Scheme (Grant No. 2025A03J3960). The work of A.I.R. was also supported by the Guangdong Provincial Project (Grant No. 2024QN11G109).}

\author[First]{Boyu Han}
\author[First]{Xu Du}
%\author[Second]{Ivano Notarnicola}
\author[Third]{Karl~H.~Johansson} 
\author[First,Forth]{Apostolos I. Rikos$^\dagger$}

\address[First]{The Artificial Intelligence Thrust of the Information Hub, The Hong Kong University of Science and Technology (Guangzhou), Guangzhou, China, (e-mail: \{boyuhan,michaelxudu,apostolosr\}@hkust-gz.edu.cn).}
%\address[Second]{Department of Electrical, Electronic, and Information Engineering, University of Bologna, Italy, (e-mail: ivano.notarnicola@unibo.it)}
\address[Third]{The Division of Decision and Control Systems, KTH Royal Institute of Technology, SE-100 44 Stockholm, Sweden, (e-mail: kallej@kth.se)}
\address[Forth]{The Department of Computer Science and Engineering, The Hong Kong University of Science and Technology, Clear Water Bay, Hong Kong, China.}

\begin{abstract}                % Abstract of 50--100 words
This paper addresses distributed consensus optimization problems with mixed-integer variables, with a specific focus on Boolean variables. We introduce a novel distributed algorithm that extends the Consensus Augmented Lagrangian Alternating Direction Inexact Newton (CALADIN) framework by incorporating specialized techniques for handling Boolean variables without relying on local mixed-integer solvers. Under the mild assumption of Lipschitz continuity of the objective functions, we establish rigorous convergence guarantees for both convex and nonconvex mixed-integer programming problems. Numerical experiments demonstrate that the proposed algorithm achieves competitive performance compared to existing approaches while providing rigorous convergence guarantees.
\end{abstract}

\begin{keyword}
Mixed Integer Programming, Distributed Optimization, Consensus ALADIN
\end{keyword}

\end{frontmatter}
%===============================================================================

\section{Introduction}
Mixed-integer programming (MIP) has attracted increasing attention in recent years due to its wide range of applications, including express transport network design (\cite{zhu2025distributed}), sensor selection and scheduling (\cite{dhingra2025admm}), wireless antenna selection (\cite{zhu2022alternating}), and optimal demand shut-offs in AC microgrids \cite{du2023optimal}.
However, as the scale and complexity of optimization problems continue to grow, solving MIPs on a single computing node has become increasingly impractical (\cite{2024_doostmohammadian_rikos_Johansson_survey}).

Traditional MIP algorithms are primarily centralized and include classical methods such as branch-and-bound, cutting-plane, and branch-and-cut.
For a comprehensive overview of these techniques, readers are referred to \cite{wolsey2020integer} and \cite{conforti2014integer}.
In addition, several heuristic approaches have demonstrated strong empirical performance without formal convergence guarantees, such as the semidefinite relaxation (SDR) method proposed by \cite{luo2010semidefinite} and the $\epsilon$-approximation algorithm for mixed-integer quadratic programming (MIQP) presented by \cite{pia2023approximation}.
However, most of the aforementioned studies focus on centralized computation, which often suffers from high memory demands and long computation times in large-scale settings.
Therefore, developing distributed algorithms for solving MIPs has become an increasingly urgent and essential research direction.

Distributed optimization algorithms can be broadly categorized into two classes \cite{ling2015dlm}: (i) primal decomposition and (ii) dual decomposition.
In primal-decomposition-based methods, each agent updates its local variables by computing and exchanging primal information such as local optima, gradients, or Hessians.
In contrast, dual-decomposition-based methods not only exchange primal information but also update and transmit dual variables associated with the coupling constraints.
This paper focuses primarily on solving MIP problems using distributed optimization algorithms of the dual decomposition type.

\noindent
\textbf{Existing Literature.}    Both primal and dual decomposition approaches have been extensively studied for solving MIPs.
On the primal-decomposition side, \cite{kuwata2010cooperative} proposed a cooperative distributed algorithm for trajectory optimization using mixed-integer linear programming (MILP), which sequentially optimizes subproblems while allowing low-dimensional parametric perturbations to neighboring trajectories, thereby ensuring robust feasibility and monotonic improvement of the overall fleet objective.
Similarly, \cite{camisa2021distributed} developed a primal-decomposition-based distributed framework for MILPs that guarantees feasible solutions within finite time and achieves low sub-optimality bounds.
In \cite{testa2019distributed}, agents iteratively generated cutting planes and exchanged active constraints to achieve convergence to a feasible solution.
However, the aforementioned works are limited to MILP problems.
More recently, the Partially Distributed Optimization Algorithm (PaDOA) proposed by \cite{murray2021partially} extended the primal decomposition framework to mixed-integer convex programs (MICP) by alternating between solving decoupled mixed-integer subproblems and a centralized MILP master problem, achieving global optimality within an $\epsilon$-convergence accuracy.
Nevertheless, this method still relies on the integration of existing centralized MILP solvers to complete its computation.

% Primal-based Distributed: 
% \begin{itemize}
%     \item Trajectory Optimization Using MILP \cite{kuwata2010cooperative} proposed a primal-based cooperative distributed algorithm that sequentially optimizes subproblems while permitting low-dimensional parametric perturbations to neighbors' trajectories, ensuring robust feasibility and monotonic improvement of the fleet objective.  \cite{camisa2021distributed} primal-based for MILP.
%     \item \cite{testa2019distributed} proposed a primal-based distributed algorithm for mixed-integer linear programs, where agents iteratively generate {\color{blue}{cutting planes}} and exchange active constraints to converge to a solution. 
%     \item \cite{murray2021partially} The PaDOA algorithm is a primal distributed optimization method that alternates between solving decoupled mixed-integer subproblems and a central MILP master problem to achieve global optimality in structured mixed-integer convex programs
%     {\color{blue}{$\epsilon$ convergence accuracy.}}
% \end{itemize}

To the best of our knowledge, dual-decomposition-based distributed algorithms for mixed-integer programming have received even greater research attention.
Within the dual decomposition framework, \cite{vujanic2016decomposition} and \cite{falsone2019decentralized} proposed dual-based decentralized algorithms for multi-agent MILPs, employing an adaptive constraint-tightening scheme to achieve finite-time convergence to feasible solutions.
Building upon this line of work, \cite{yfantis2023hierarchical} integrated second-order updates on the dual side, resulting in significantly faster convergence.
More recently, \cite{dong2024distributed} extended the applicability of distributed mixed-integer optimization to mixed-integer nonlinear programming (MINLP) problems, including nonconvex cases, by introducing a convexification procedure for local subproblems.
However, all of the above methods require each agent to invoke external MIP solvers for subproblem computations, which limits scalability and real-time applicability.

The Alternating Direction Method of Multipliers (ADMM), as a representative of dual decomposition methods, has been extensively studied for distributed mixed-integer optimization. In \cite{sun2024decomposition}, ADMM was applied to solve MILPs with a rigorous convergence analysis. Similarly, \cite{zhu2025distributed} employed the ADMM framework to tackle the express transport network design problem formulated as an MILP. In contrast, \cite{alavian2017improving} extended ADMM to MICP, where the update of integer variables relies on a centralized MIQP solver. Notably, \cite{liu2022distributed} demonstrated that, under sufficiently large augmented Lagrangian parameters, ADMM can achieve a zero duality gap for MICP, representing one of the strongest convergence results for ADMM in this context. However, the aforementioned methods all require agents or a central coordinator to solve MIP subproblems, which can be computationally expensive. Only a few works have avoided the use of centralized MIP solvers. For instance, \cite{le2025distributed} addressed an MIQP traffic coordination problem using ADMM combined with a \emph{Big-M} method, but the approach exhibited numerical instability. Similarly, \cite{takapoui2017alternating}, \cite{takapoui2020simple} and \cite[Eq.~(3)]{alavian2017improving} directly rounded integer variables within ADMM to ensure convergence; however, these methods are heuristic in nature and do not provide formal convergence guarantees. %, \cite{takapoui2017alternating},

The Augmented Lagrangian based Alternating Direction Inexact Newton (ALADIN) algorithm \cite{houska2016augmented,Du2025ACC}, which incorporates Sequential Quadratic Programming (SQP) principles into ADMM, guaranteed global convergence for convex problems and local convergence for nonconvex problems. Its mixed-integer extensions \cite{murray2018mixed, murray2018hierarchical, murray2020convergence} provided theoretical convergence guarantees for both convex and potentially nonconvex MIPs, yet their practical implementation relies on centralized MIP solvers, which can be computationally demanding.

{\color{black}{To the best of our knowledge, the existing literature survey indicates that there is currently no reported work capable of solving distributed mixed-integer optimization problems with convergence guarantees without relying on mixed-integer programming solvers.}}
This motivates the development of a distributed solver for mixed-integer optimization that (i) operates independently of local mixed-integer solvers, (ii) provides rigorous convergence guarantees for both convex and nonconvex MINLPs.%, and (iii) achieves high computational efficiency for large-scale networked problems.

\noindent
\textbf{Main Contributions.}  Motivated by the aforementioned challenges, we propose a novel mixed-integer optimization algorithm based on Consensus ALADIN, termed \emph{Mix-CALADIN}, where all integer variables are Boolean, i.e., taking values in ${0,1}$. To the best of our knowledge, this is the first work to employ Consensus ALADIN in a framework that (i) efficiently solves distributed mixed-integer optimization problems, (ii) operates without relying on local mixed-integer solvers, and (iii) requires only the mild assumption that the objective functions are Lipschitz continuous.
Our main contributions are as follows.\\
%\noindent
\textbf{A.}
We propose a novel two-stage distributed optimization algorithm for solving distributed mixed-integer programming problems. In Stage I, Consensus ALADIN is applied directly to solve a relaxed continuous version of the problem, where the integer constraints are temporarily removed. This provides a deterministic lower bound for the original distributed mixed-integer consensus problem and also supplies a high-quality initial point for Stage II. Inspired by \cite{Hall2021, hall2025lcqpow,du2023optimal}, Stage II integrates, at the Consensus ALADIN coordinator, a constraint that enforces all relaxed integer variables to lie within $[0,1]$ and adds a penalty term to the objective function that drives the variables toward Boolean values, thereby guiding the algorithm to converge.\\
\textbf{B.} We provide convergence guarantees for Mix-CALADIN. In Stage I, the algorithm inherits all convergence properties of Consensus ALADIN, namely, global linear convergence for convex $L$-smooth problems (see Theorem~\ref{the: convex}) and local linear convergence for nonconvex problems (see Theorem~\ref{the: nonconvex}). For Stage II, we show that, under the mild assumption that the objective functions are Lipschitz continuous, the algorithm converges when the augmented Lagrangian parameter $\rho$ is chosen larger than the Lipschitz constant (see Theorem~\ref{the: stage 2}). Numerical simulations corroborate these theoretical results.
\section{Problem Formulation}\label{sec: Preliminaries}
\ADDBoYu{We consider the following distributed consensus mixed-integer optimization problem with $N$ agents: }
% and provides an overview of CALADIN, an algorithm designed for large-scale distributed consensus optimization with continuous variables.

%\subsection{Problem Formulation}\label{sec: Problem}

\begin{equation}\label{eq: mixed problem} \small
    \begin{split}
        \min_{\substack{x_i,\, z\in \mathbb R^{n_c+n_d} \\ \forall i \in \{1,\dots,N\}}} &\quad \sum_{i=1}^N f_i(x_i) \\
        \text{s.t.} &\quad x_i = z, 
    \end{split}
\end{equation}
where $x_i = [y_i^\top, b_i^\top]^\top, y_i \in \mathbb{R}^{n_c}, b_i \in \{0,1\}^{n_d}$. Each agent $i$ has a local objective function $f_i: \mathbb{R}^{n_c + n_d} \rightarrow \mathbb{R}$ and a decision variable $x_i = [y_i^\top, b_i^\top]^\top$, where $y_i \in \mathbb{R}^{n_c}$ are continuous variables and $b_i \in \{0,1\}^{n_d}$ are Boolean variables. The global consensus variable $z = [z_c^\top, z_d^\top]^\top$ comprises continuous components $z_c \in \mathbb{R}^{n_c}$ and Boolean components $z_d \in \{0,1\}^{n_d}$.
%\subsection{Preliminaries of CALADIN}
\ADDBoYu{\section{Preliminaries of CALADIN}\label{sec: Preliminaries}}
\ADDBoYu{This section provides the preliminary knowledge for our work. We begin with an overview of CALADIN, an algorithm for large-scale distributed consensus optimization with continuous variables, which lays the foundation for our approach}. 

CALADIN, see \cite{Du2025ACC}, \cite{Du2025ECC} is a distributed algorithm for solving consensus optimization problems of the form \eqref{eq: mixed problem} without boolean part,
%\begin{equation}\label{eq: continuous problem} \small
%    \begin{split}
%        \min_{\substack{x_i,\, z,\in \mathbb R^n\, \\ \forall i \in \{1,\dots,N\}}} &\quad \sum_{i=1}^N f_i(x_i)  \qquad
%        \text{s.t.}\quad x_i = z,
%    \end{split}
%\end{equation}
where all local variables $x_i$ and the global consensus variable $z$ are $n$-dimensional vectors. The augmented Lagrangian is given by:
%\begin{equation}\label{eq: Lagrangian} \small
    $\mathcal{L}(x, z, \lambda) \coloneqq \sum_{i=1}^N  f_i(x_i) + \lambda_i^\top (x_i - z) + \frac{\rho}{2} \|x_i - z\|^2,$
%\end{equation}
where $x = [x_1^\top, \dots, x_N^\top]^\top$ collects the primal variables and $\lambda = [\lambda_1^\top, \dots, \lambda_N^\top]^\top$ collects the dual variables.

The CALADIN algorithm proceeds as follows:
\begin{equation}\label{eq: CALADIN}\small
\left\{
    \begin{aligned}
        &x_i^{[k+1]} = \arg\min_{x_i}  f_i(x_i) + (\lambda_i^{[k]})^\top x_i \\
        &\qquad\qquad\qquad\qquad + \frac{\rho}{2} \|x_i - z^{[k]}\|^2 , \quad \forall i \in \mathcal{V}, \\[2mm]
        &g_i = \nabla f_i(x_i^{[k+1]}), \quad H_i \approx \nabla^2 f_i(x_i^{[k+1]}) \succ 0, \\[2mm]
        &(z^{[k]}, \lambda^{[k]}) = \left\{
            \begin{aligned}
                \min_{\Delta x_i \in \mathbb{R}^n} &\quad \sum_{i=1}^N  \Delta x_i^\top H_i \Delta x_i + g_i^\top \Delta x_i  \\
                \text{s.t.} &\quad x_i^{[k+1]} + \Delta x_i = z \mid \lambda_i, \quad \forall i.
            \end{aligned}
        \right.
    \end{aligned}
\right.
\end{equation}

The algorithm consists of three main steps: (1) each agent minimizes its augmented Lagrangian with respect to local variables; (2) all agents compute local gradients $g_i$ and positive definite Hessian approximations $H_i$; (3) a coordinator solves a consensus quadratic program (QP) using the collected local information to update the global variable $z$ and dual variables $\lambda_i$.

For continuous variables, our previous work \cite{Du2023_Arxiv,Du2025ACC,du2025decentralized}) have established local convergence guarantees for nonconvex problems and global convergence for convex problems. \ADDBoYu{The aim of this paper is to extend the CALADIN framework to handle mixed-integer optimization problems as formulated in \eqref{eq: mixed problem}.}

%\newpage
\section{Mix-CALADIN}\label{sec: algorithm}

\ADDBoYu{This section presents a novel distributed algorithm for solving problem \eqref{eq: mixed problem}. }%We begin by stating the following assumptions that are fundamental to our theoretical development, each applicable to different scenarios.

%Assumptions \ref{ass: Lip} and \ref{ass: convex} ensure global convergence of Stage I in Algorithm \ref{alg: MIX-ALADIN first}, as \ADDBoYu{we establish in Theorem \ref{the: convex} below. Assumption \ref{ass: Lip} establishes local convergence properties (see Theorem \ref{the: stage 2} below), and Assumption \ref{ass: nonconvex} guarantees local convergence of the algorithm (see Theorem \ref{the: stage 2} below).}

\subsection{Algorithm Development}\label{sec: algorithm structure}
In this section, we present our proposed two-stage distributed mixed-integer 
algorithm, detailed below as Algorithm \ref{alg: MIX-ALADIN first}. 

{\color{black} In Stage I, we consider a continuous relaxation of problem \eqref{eq: mixed problem}, constructed by relaxing the Boolean constraint. Crucially, this relaxed problem yields a theoretically guaranteed lower bound for the original problem \eqref{eq: mixed problem}, and this guarantee holds irrespective of  the convexity of the functions $f_i$. The implementation of Stage I follows the established method described in \eqref{eq: CALADIN} and is therefore not repeated here. Notably, the coordination step in \eqref{eq: coordinator} admits a closed-form solution, which reduces the computational burden compared to the original CALADIN formulation.}

% Phase I of Mix-CALADIN is minimizing a relax problem:
% \begin{equation}
%         \begin{split}
%             \mathop{\min}_{x_i,z} &\quad\sum_{i=1}^N f_i(x_i)\\
%             \text{s.t.} &\quad x_i = z  \\
%             &\quad x_i = [y_i^\top, b_i^\top]^\top,\\
%           %  &\quad y_i\in \mathbb R^{n_c},\; 0\leq b_i\leq 1.
%         \end{split}
% 		\end{equation}

\begin{algorithm}[ht]
\small
	\caption{Mix-CALADIN}
	%\textbf{Initialization:} Initial guess $(p^0,\lambda^0,\mu^0)$, choose $\Sigma_i,\rho^0,\mu^0,\epsilon$. \\
	\textbf{Initialization.} Randomly choose dual variable $\lambda_i \in \mathbb{R}^{n_c+n_d}$ for each node $i \in \mathcal{V}$, and global variable $z \in \mathbb{R}^{n_c+n_d}$. Set $\beta>1$. 
    \\
    \textbf{Stage I Iteration.}
	\begin{enumerate}
		\item[1.] Paralleled solve local NLP without subsystem coupling:
		\begin{equation}\label{eq: local OPT}
        \begin{aligned}
            x_i^{[k+1]}=&\mathop{\arg\min}_{x_i} f_i(x_i)+ (\lambda_i^{[k]})^\top x_i +\frac{\rho_1}{2} \left\|x_i-z^{[k]} \right\|^2 \\
           % &\text{s.t.} \quad 0\leq b_i \leq 1.
        \end{aligned}
			\end{equation}

\item[2.] Evaluate the Hessian and gradient:
\begin{equation}\label{eq: gradient and Hessian}
    g_i = \nabla f_i(x_i^{[k+1]}),\quad H_i\approx\nabla^2 f_i(x_i^{[k+1]})\succ 0.
\end{equation}
		
		\item[3.] The coordination:
		\begin{equation}\label{eq: coordinator}\left\{
        \begin{aligned}
&z^{[k+1]}=\left( \sum_{i=1}^N H_i\right)^{-1}\left(\sum_{i=1}^N H_i x_i^{[k+1]} -g_i \right)\\
& \lambda_i^{[k+1]} = H_i(x_i^{[k+1]} - z^{[k+1]} ) -g_i. 
          %  &\quad 0\leq z_{d}\leq 1.
        \end{aligned}\right.
		\end{equation}
	\end{enumerate}
   % Projection operation: $ z^{[k+1]} = \Pi_{[0,1]} (\tilde z^{[k+1]})$.
    If $\|z^{[k+1]}-z^{[k]}\|\leq \epsilon,$ then switch to Stage II with $\tilde z^* = z^{[k+1]}$ \\[1mm]
    %\\\hrulefill
    \textbf{Stage II}\\
    \textbf{Initialization.} Set $z^1 = \tilde z^*$ from Stage I. Set $\alpha = 1, k = 1$.\\
    \textbf{Iteration.}
     \begin{enumerate}
  \item[1.]  Inner level: Do iteration \emph{k}:\\
    % \item With CasaDi\begin{equation}
    %     y_i^{[k+1]} = \mathop{\arg\min}_{y_i} f_i(y_i, z_d) + \lambda_i^\top [y_i^\top, z_d^\top]^\top + \frac{\sigma}{2} \| y_i - z_c \|^2
    % \end{equation}
    % and transmit it to the coordinator.
    \begin{itemize}
        \item For each agent: evaluate the gradient and transmit it to the coordinator:
\begin{equation}\label{eq: GRADIENT EVA}
    g_i = \nabla f_i(z^{[k]}).%,\quad H_i\approx\nabla^2 f_i(x_i^+).
\end{equation}		
		\item For coordinator: receive $g_i$ from each agent      
  %       		\begin{equation}\label{eq: stage 2 QP}
  %       \begin{split}
  %           z^{[k+1]}=\mathop{\arg\min}_{z,\Delta X_i} &\quad \frac{\sigma}{2} \sum_{i=1}^N \|\Delta x_i\|^2 + \sum_{i=1}^N g_i^\top z   \\
  %           &+ \alpha  (\mathbf{1}- 2z_{d}^{[k]})^\top z_{d}\\%  ^\top H_i \Delta x_i \\
  %           \text{s.t.} &\quad x_i^+ + \Delta x_i = z | \lambda_i\\
  %           &\quad 0\leq z_{d}\leq 1.\\
  %          % &\quad (\mathbf{1}- 2z_{d}^-)^\top z_{d} \leq \gamma
  %       \end{split}
		% \end{equation} 
and transmit $z^{[k]}$ to each agent.\\
		\begin{equation}\label{eq: stage 2 QP}
        \begin{split}
            z^{[k+1]}=\mathop{\arg\min}_{z} &\quad \frac{N\rho_2}{2} \|z-z^{[k]}\|^2 + z^\top\sum_{i=1}^N g_i  \\
            &+ \alpha  (\mathbf{1}- 2z_{d}^{[k]})^\top z_{d}\\%  ^\top H_i \Delta x_i \\
            \text{s.t.} %&\quad x_i^+ + \Delta x_i = z | \lambda_i.\\
            &\quad 0\leq z_{d}\leq 1.\\
           % &\quad (\mathbf{1}- 2z_{d}^-)^\top z_{d} \leq \gamma
        \end{split}
		\end{equation} 
    \end{itemize}
      If $\|z^{[k+1]}-z^{[k]}\|\leq \epsilon_{inner}$, then output to the \emph{outer level} $z^* = z^{[k+1]}$. Else set \emph{k=k+1} and go to the \emph{inner level}.

    \item[2.]  Outer level: If $(1-z_d^*)^\top z_d^* \geq \epsilon_{outer}$, then the coordinator sets $\alpha = \beta\alpha, k=k+1, z^{[k]} = z^*$ %and $\rho = \beta \rho$, 
and go to the \emph{inner level}. Else terminate operation.\\
\item[3.] Output: $z^*$.
    \end{enumerate}
	\label{alg: MIX-ALADIN first}
\end{algorithm}

Stage II of Algorithm \ref{alg: MIX-ALADIN first} is initialized using the solution from Stage I by setting $z^{[1]} = \tilde {z}^*$ and implements a bi-level optimization framework. The inner level is inspired by \cite[Algorithm~3)]{wu2025time}, where information exchange is managed solely by a coordinator while each agent performs only gradient evaluation. Specifically, at each iteration, the inner level executes two steps: (i) each agent computes its local gradient $g_i$ at $z^{[k]}$ according to \eqref{eq: GRADIENT EVA}, and (ii) the coordinator solves the convex quadratic program in \eqref{eq: stage 2 QP} with box constraint $0 \leq z_d \leq 1$, employing the term $\alpha (\mathbf{1} - 2z_d^{[k]})^\top z_d$ to drive the continuous variable $z_d$ toward Boolean values as $\alpha$ increases, a strategy motivated by \cite{zhu2022alternating,du2023optimal}. The inner level iterates until $\|z^{[k+1]} - z^{[k]}\| \leq \epsilon_{\text{inner}}$, outputting $z^* = z^{[k+1]}$ to the outer level or proceeding to iteration $k+1$ otherwise. In the outer level, if $(\mathbf{1} - z_d^{[k+1]})^\top z_d^{[k+1]} \geq \epsilon_{\text{outer}}$, the penalty parameter is increased via $\alpha \leftarrow \beta\alpha$ ($\beta > 1$) and the inner level is restarted with $k \leftarrow k + 1$ and $z^{[k]} = z^*$; otherwise, $z^*$ is returned as the final solution. The theoretical convergence analysis is provided in the subsequent section.   Note that, Stage I of Algorithm~\ref{alg: MIX-ALADIN first} (i.e., CALADIN) could be replaced by other distributed optimization algorithms to initialize Stage II. However, since CALADIN offers favorable convergence guarantees and numerical performance for both convex and non-convex problems \cite{Du2025ACC}, this paper focuses on CALADIN.

\begin{remark} \label{the: nonconvex ensure positive definiteness}\textbf{\emph{Lightweight Implementation:}}
    For nonconvex problems, the Hessian matrices $\nabla^2 f_i$ are not guaranteed to be positive definite. To ensure positive definiteness in Stage I of Algorithm \ref{alg: MIX-ALADIN first}, we employ the following regularization scheme:
    \begin{equation}\small
        H_i = \begin{cases}
            \nabla^2 f_i(x_i^{[k+1]}), & \text{if } \sigma_i > 0, \\
            \nabla^2 f_i(x_i^{[k+1]}) + 1.1(|\sigma_i| + 0.1) \cdot I, & \text{if } \sigma_i  \leq 0,
        \end{cases}
    \end{equation}
    where $\sigma_i$ denotes the minimum eigenvalue of $\nabla^2 f_i(x_i^{[k+1]})$.

    For convex problems, we simplify the implementation by replacing \eqref{eq: coordinator} with:
    \begin{equation}\label{eq: coordinator2}\small\left\{
        \begin{aligned}
            &z^{[k+1]} = \frac{1}{N} \left( \sum_{i=1}^N x_i^{[k+1]} - \frac{g_i}{\rho} \right), \\
            &\lambda_i^{[k+1]} = \rho(x_i^{[k+1]} - z^{[k+1]}) - g_i.
        \end{aligned}\right.
    \end{equation}
    See \cite{Du2025ACC} for details.
\end{remark}

\begin{remark} \label{the: nonconvex penalty term}\textbf{\emph{Smoothness Approximation for Boolean Variables:}}
    To enforce convergence of $z_d$ to Boolean solutions, one could incorporate the nonconvex term
    \begin{equation}\label{eq: unstable}
        \alpha (\mathbf{1} - z_d)^\top z_d
    \end{equation}
    into \eqref{eq: stage 2 QP} with sufficiently large $\alpha$. However, this approach may lead to numerical instability. Inspired by \cite{Hall2021a,Hall2021,hall2025lcqpow,zhu2022alternating,du2023optimal}, we instead use the convex approximation $\alpha (\mathbf{1} - 2z_d^{[k]})^\top z_d$ at each iterate $z_d^{[k]}$ to maintain numerical robustness while achieving the desired convergence behavior. Note that the convergence of the centralized linear complementarity QP (LCQP) discussed in \cite{Hall2021} relies on step-size adjustment for the primal variable, i.e., a globalization strategy. In contrast, due to the inherent distributed optimization nature of ALADIN, no such globalization strategy is required. Moreover, aside from \cite{le2025distributed} (numerically unstable) and \cite{takapoui2017alternating} (heuristic, no convergence guarantee), no existing distributed optimization algorithm operates without a centralized mixed-integer programming solver.
\end{remark}

% \begin{algorithm}[ht]
% 	\caption{Mix-CALADIN: the second stage Discrete Approximation}
% 	%\textbf{Initialization:} Initial guess $(p^0,\lambda^0,\mu^0)$, choose $\Sigma_i,\rho^0,\mu^0,\epsilon$. \\
% 	\textbf{Repeat:}
% 	\begin{enumerate}
% 		% \item Paralleled solve local NLP without subsystem coupling:
% 		% \begin{equation}\label{eq: local step}
%   %       \begin{aligned}
%   %           x_i^+=&\mathop{\arg\min}_{x_i} f_i(x_i)+ \lambda_i^\top x_i +\frac{\rho}{2} \left\|x_i-z \right\|^2 \\
%   %           &\text{s.t.} \quad 0\leq b_i \leq 1.
%   %       \end{aligned}
% 		% 	\end{equation}

% \item Evaluate the Hessian and gradient:
% \begin{equation}
%     g_i = \nabla f_i(z).%,\quad H_i\approx\nabla^2 f_i(x_i^+).
% \end{equation}
		
% 		\item The coordination:
% 		\begin{equation}\label{eq: stage 2 QP}
%         \begin{split}
%             z=\mathop{\arg\min}_{z} &\quad \frac{N\rho}{2} \|z-z^{[k]}\|^2 + z^\top\sum_{i=1}^N g_i  \\
%             &+ \alpha  (\mathbf{1}- 2z_{d}^{[k]})^\top z_{d}\\%  ^\top H_i \Delta x_i \\
%             \text{s.t.} %&\quad x_i^+ + \Delta x_i = z | \lambda_i.\\
%             &\quad 0\leq z_{d}\leq 1.\\
%            % &\quad (\mathbf{1}- 2z_{d}^-)^\top z_{d} \leq \gamma
%         \end{split}
% 		\end{equation}
% 	\end{enumerate}
% 	\label{alg: MIX-ALADIN second}
% \end{algorithm}

\subsection{Convergence Analysis}\label{sec: convergence}
This section presents the convergence analysis of Algorithm \ref{alg: MIX-ALADIN first}, structured around three main theorems. Theorems \ref{the: convex} and \ref{the: nonconvex} establish the convergence properties of Stage I \ADDBoYu{for both convex and nonconvex scenarios}, while Theorem \ref{the: stage 2} addresses the convergence of Stage II. \ADDBoYu{Let us note that the specific assumptions required for each theorem are mentioned in the corresponding theorem.} Prior to presenting Theorem \ref{the: stage 2}, we introduce a supporting lemma that facilitates the proof.

\begin{assumption}\label{ass: Lip}
    For each agent \ADDBoYu{$i$}, the local cost function $f_i: \mathbb{R}^n \rightarrow \mathbb{R}$ is closed, proper, and $L$-smooth, i.e., there exists a Lipschitz constant $L > 0$ such that for all $x_a, x_b \in \mathbb{R}^n$:
    \begin{equation}\label{eq: lip}\small
        f_i(x_a) \leq f_i(x_b) + \nabla f_i(x_b)^\top (x_a - x_b) + \frac{L}{2}\|x_a - x_b\|^2.
    \end{equation}
\end{assumption}

\begin{assumption}\label{ass: convex}
    For each agent \ADDBoYu{$i$}, the local cost function $f_i: \mathbb{R}^n \rightarrow \mathbb{R}$ is closed, proper, and strongly convex, i.e., there exists a  constant $\mu_i > 0$ such that for all $x_a, x_b \in \mathbb{R}^n$:
    \begin{equation}\small\label{eq: mu}\small
\begin{split}
    f_i(x_\alpha)+&\nabla f_i(x_\alpha)^\top (x_\beta-x_\alpha) +\frac{\mu_i}{2}\|x_\beta-x_\alpha\|^2\leq f_i(x_\beta). 
\end{split}
\end{equation}
\end{assumption}

\begin{assumption}\label{ass: nonconvex}
    For each agent \ADDBoYu{$i$}, the local cost function $f_i: \mathbb{R}^n \rightarrow \mathbb{R}$ is closed, proper, twice continuously differentiable, and satisfies the second-order sufficient condition at a local minimizer $x^*$ of \eqref{eq: mixed problem} without boolean part, i.e., $\nabla^2 f_i(x^*) \succ 0$.
\end{assumption}

\begin{theorem}\label{the: convex}\textbf{\emph{Global Linear Convergence  for Convex Problems:}}
Suppose each agent $i$ has a local cost function $f_i$ satisfying Assumptions \ref{ass: Lip} and \ref{ass: convex}. Then, for any $\rho_1 > 0$, Stage I of Algorithm \ref{alg: MIX-ALADIN first} exhibits global linear convergence.
\end{theorem}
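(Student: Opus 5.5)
Stage I is exactly Consensus ALADIN applied to the continuous relaxation of \eqref{eq: mixed problem}. The relaxation drops the Boolean constraint, so every $f_i$ is defined on all of $\mathbb{R}^{n_c+n_d}$. Under Assumptions \ref{ass: Lip} and \ref{ass: convex}, each $f_i$ is $L$-smooth and $\mu_i$-strongly convex, and the relaxed problem has a unique minimizer $z^\star$ with multipliers $\lambda_i^\star=-\nabla f_i(z^\star)$ and $\sum_i\lambda_i^\star=0$. The plan is therefore to reduce the statement to the known global linear convergence of CALADIN for strongly convex $L$-smooth problems \cite{Du2025ACC}. For completeness, I would sketch the Lyapunov argument for the simplified coordination \eqref{eq: coordinator2}, which is the form used for convex problems per Remark \ref{the: nonconvex ensure positive definiteness}.

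\textbf{Step 1 (a dual invariant).} Summing the $\lambda$-update in \eqref{eq: coordinator2} and using the $z$-update gives $\sum_i\lambda_i^{[k+1]}=\rho\sum_i(x_i^{[k+1]}-z^{[k+1]})-\sum_i g_i=0$ for all $k\ge 1$. Moreover, the optimality condition of the local step \eqref{eq: local OPT} reads $g_i=\nabla f_i(x_i^{[k+1]})=-\lambda_i^{[k]}-\rho_1(x_i^{[k+1]}-z^{[k]})$. Substituting this into the coordinator step expresses $(z^{[k+1]},\lambda^{[k+1]})$ as an explicit map of $(z^{[k]},\lambda^{[k]})$ through the proximal-type local solutions. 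In particular, this yields the identity $\lambda_i^{[k+1]}-\lambda_i^{[k]}=\rho(x_i^{[k+1]}-z^{[k+1]})+\rho_1(x_i^{[k+1]}-z^{[k]})$ (taking $\rho=\rho_1$).

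\textbf{Step 2 (contraction of a Lyapunov function).} I would define
$V^{[k]}=\rho\sum_i\|z^{[k]}-z^\star\|^2+\frac1\rho\sum_i\|\lambda_i^{[k]}-\lambda_i^\star\|^2$.
Strong monotonicity of $\nabla f_i$ gives
$(\nabla f_i(x_i^{[k+1]})-\nabla f_i(z^\star))^\top(x_i^{[k+1]}-z^\star)\ge\mu_i\|x_i^{[k+1]}-z^\star\|^2$.
Co-coercivity from $L$-smoothness gives the same inner product $\ge\frac1L\|g_i-\nabla f_i(z^\star)\|^2$. Plugging the identities from Step 1 into these inequalities and using $\sum_i(\lambda_i-\lambda_i^\star)=0$ to kill the cross terms involving $z$, I expect to obtain $V^{[k+1]}\le V^{[k]}-c\,\big(\sum_i\|x_i^{[k+1]}-z^\star\|^2+\|g_i-\nabla f_i(z^\star)\|^2\big)$ for some $c>0$ depending on $\rho,\mu_i,L$.

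\textbf{Step 3 (linear rate).} Finally, I would bound $V^{[k]}$ above by a multiple of the dissipated quantity. Nonexpansiveness of the proximal step gives $\|z^{[k+1]}-z^\star\|\lesssim\sum_i\|x_i^{[k+1]}-z^\star\|+\|g_i-g_i^\star\|/\rho$. The $\lambda$-update gives $\|\lambda_i^{[k+1]}-\lambda_i^\star\|\le\rho\|x_i^{[k+1]}-z^{[k+1]}\|+\|g_i-g_i^\star\|$. Together these give $V^{[k+1]}\le\kappa\,V^{[k]}$ with $\kappa<1$ for every $\rho_1>0$.

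The main obstacle is Step 2. The cross terms between the $z$- and $\lambda$-errors must cancel exactly, and this hinges on the consensus averaging and the zero-sum dual invariant. If a clean cancellation fails, my fallback is to cite directly the global linear convergence theorem of \cite{Du2025ACC}, whose hypotheses (strong convexity and $L$-smoothness on the relaxed continuous problem) are exactly Assumptions \ref{ass: Lip} and \ref{ass: convex}.
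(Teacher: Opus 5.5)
Your proposal is correct and takes the same route as the paper: the paper's proof reduces Stage I to the global linear convergence of C-ALADIN in \cite[Theorem 2]{Du2025ACC}, which is your plan and your fallback. Your supplementary Lyapunov sketch for the convex coordinator \eqref{eq: coordinator2} with $\rho=\rho_1$ is the standard ALADIN convex analysis that citation alludes to, and the cross-term cancellation you were worried about does go through: in the variables $v_i=z-\lambda_i/\rho$ the iteration is a Peaceman--Rachford step, and once $\sum_i\lambda_i^{[k]}=0$ one has $V^{[k]}=\rho\sum_i\|v_i^{[k]}-v_i^\star\|^2$.
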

\textbf{Proof.} \ADDBoYu{The result in \cite[Theorem 2]{Du2025ACC} establishes global linear convergence of C-ALADIN for smooth and strongly convex problems. The global linear convergence under Assumptions \ref{ass: Lip} and \ref{ass: convex} follows a similar analysis as \cite[Theorem 2]{Du2025ACC}. It is omitted here due to space limitations.}  \hfill $\blacksquare$
%establishes global linear convergence of C-ALADIN for smooth and strongly convex problems, and since Theorem \ref{the: convex} employs stronger assumptions (Assumptions \ref{ass: Lip} and \ref{ass: convex}), the convergence rate under these enhanced conditions is guaranteed to be at least globally linear. \hfill $\blacksquare$

\begin{theorem}\label{the: nonconvex}\textbf{\emph{Local Linear Convergence for nonconvex Problems:}}
Suppose each agent $i$ has a local cost function $f_i$ satisfying Assumption \ref{ass: nonconvex}. Then, for a sufficiently large $\rho_1 > 0$, Stage I of Algorithm \ref{alg: MIX-ALADIN first} exhibits local linear convergence.
\end{theorem}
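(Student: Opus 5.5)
The natural route mirrors the proof of Theorem~\ref{the: convex}: reduce Stage I to the existing local convergence theory of Consensus ALADIN for smooth nonconvex problems, as established in \cite{Du2023_Arxiv,Du2025ACC}. Stage I runs exactly the iteration \eqref{eq: CALADIN} on the Boolean-relaxed problem, with coordination step \eqref{eq: coordinator}. The plan is therefore to verify that the hypotheses of that local result hold under Assumption~\ref{ass: nonconvex}, and then to sketch the contraction argument in enough detail to see why $\rho_1$ must be large.

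\textbf{Step 1: the local step.} Fix a local minimizer $x^*$ of the relaxed problem, with multipliers $\lambda_i^* = -\nabla f_i(x^*)$. Since $\nabla^2 f_i(x^*)\succ 0$ and $f_i\in C^2$, I will choose $\rho_1$ large enough that $\nabla^2 f_i(x)+\rho_1 I\succ 0$ on a neighbourhood of $x^*$. Then the subproblem \eqref{eq: local OPT} is locally strongly convex there, and the implicit function theorem yields a locally unique, $C^1$ solution map $x_i^{[k+1]} = \xi_i(z^{[k]},\lambda_i^{[k]})$ with fixed point $x^*$.

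Linearizing the optimality condition $\nabla f_i(x_i^{[k+1]}) + \lambda_i^{[k]} + \rho_1(x_i^{[k+1]} - z^{[k]}) = 0$ around $x^*$ gives the estimate
\[
\|x_i^{[k+1]}-x^*\| \le \|(\nabla^2 f_i(x^*)+\rho_1 I)^{-1}\|\,\bigl(\rho_1\|z^{[k]}-x^*\| + \|\lambda_i^{[k]}-\lambda_i^*\|\bigr) + o(\cdot).
\]

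\textbf{Step 2: the coordination step.} The step \eqref{eq: coordinator} is the exact solution of the consensus QP, so it is an affine map of $(x_i^{[k+1]}, g_i)$ for fixed $H_i$. I take $H_i = \nabla^2 f_i(x_i^{[k+1]})$, which is positive definite near $x^*$, so the regularization in Remark~\ref{the: nonconvex ensure positive definiteness} is inactive. Expanding $g_i = \lambda_i^{*\,\prime}$-style around $x^*$, namely $g_i = -\lambda_i^* + \nabla^2 f_i(x^*)(x_i^{[k+1]}-x^*) + O(\|x_i^{[k+1]}-x^*\|^2)$, and inserting it into \eqref{eq: coordinator} shows that the first-order terms cancel, as in an SQP/Newton step. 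This yields
\[
\|z^{[k+1]}-x^*\| + \textstyle\sum_i\|\lambda_i^{[k+1]}-\lambda_i^*\| \le C \textstyle\sum_i \|x_i^{[k+1]}-x^*\|^2 + \text{(error from Hessian approximation)}.
\]

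\textbf{Step 3: composition.} Combining Steps 1 and 2 gives a recursion in $e^{[k]} := \|z^{[k]}-x^*\| + \sum_i\|\lambda_i^{[k]}-\lambda_i^*\|$ of the form $e^{[k+1]}\le \kappa\, e^{[k]} + O((e^{[k]})^2)$. Here $\kappa<1$ comes from the Hessian evaluation error and the contraction factor of the local map, which is controlled by choosing $\rho_1$ large. Linear convergence then follows for any initialization in a sufficiently small ball around $(x^*,\lambda^*)$.

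\textbf{Main obstacle.} The delicate point is showing that $\kappa<1$ uniformly, i.e.\ that the local step's sensitivity does not amplify the dual error. I would try first to use the relation $\lambda_i^{[k]} = H_i(x_i^{[k]}-z^{[k]})-g_i$, which ties $\lambda$ to $z$ and lets the error be measured in $z$ alone. Failing that, I would cite the corresponding local theorem of \cite{Du2023_Arxiv} directly, as the paper does for Theorem~\ref{the: convex}.
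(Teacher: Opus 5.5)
Your proposal matches the paper: its entire proof of this theorem is the citation \cite[Theorem~3]{Du2025ACC}, which is the same reduction to existing local C-ALADIN theory that you propose and fall back on. As an aside, the ``main obstacle'' in your sketch is not real. With $H_i=\nabla^2 f_i(x_i^{[k+1]})$, your Step~2 (using $\sum_i\nabla f_i(x^*)=0$) already makes the $z$- and $\lambda$-errors $o(\max_i\|x_i^{[k+1]}-x^*\|)$. Step~1 therefore only needs Lipschitz dependence of the local solution on $(z^{[k]},\lambda_i^{[k]})$, not contraction. In particular, a large $\rho_1$ is not what produces $\kappa<1$: it pushes the $z$-sensitivity factor $\rho_1/(\sigma_{\min}(\nabla^2 f_i(x^*))+\rho_1)$ toward $1$.
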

\textbf{Proof.} See \cite[Theorem 3]{Du2025ACC}. \hfill $\blacksquare$

We now establish the convergence theory for Stage II of Algorithm \ref{alg: MIX-ALADIN first}. First, we present a key lemma that will support the subsequent proof of Theorem \ref{the: stage 2}.

\begin{lemma}\label{lemma}
Suppose each agent $i$ has a local cost function $f_i$ satisfying Assumption \ref{ass: Lip}. Define $\gamma^{[k]} \coloneqq (\mathbf{1} - z_d^{[k]})^\top z_d^{[k]}$. Then the following inequality holds:
\begin{equation}\label{eq: lemma}\small
    \begin{split}
        &\sum_{i=1}^N g_i(z^{[k]})^\top (z^{[k+1]} - z^{[k]}) \\
        \leq &\alpha (\gamma^{[k]} - \gamma^{[k+1]}) - \frac{N\rho_2}{2} \|z^{[k+1]} - z^{[k]}\|^2.
    \end{split}
\end{equation}
\end{lemma}

\textbf{Proof.} See Appendix \ref{Appenmdix Lemma1}.  \hfill $\blacksquare$

Theorem \ref{the: stage 2} establishes the convergence of the inner level of Stage II in Algorithm \ref{alg: MIX-ALADIN first} by demonstrating the monotonic decrease of the energy function:
\begin{equation}\label{eq: energy}\small
    E(z^{[k]}) = \sum_{i=1}^N f_i(z^{[k]}) + \alpha \gamma^{[k]}.
\end{equation}

\begin{theorem}\label{the: stage 2}\textbf{\emph{Local Convergence of Stage II of Algorithm \ref{alg: MIX-ALADIN first}:}}
Suppose each agent $i$ has a local cost function $f_i$ satisfying Assumption \ref{ass: Lip}. For fixed $\rho_2 > L$, the energy function satisfies:
\begin{equation}\label{eq: theorem 3}\small
    E(z^{[k+1]}) < E(z^{[k]}).
\end{equation}
\end{theorem}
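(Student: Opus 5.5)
The plan is a standard descent-lemma argument that combines Assumption~\ref{ass: Lip} with Lemma~\ref{lemma}. Since all agents evaluate their gradients at the common point $z^{[k]}$ in Stage II, we apply the $L$-smoothness inequality \eqref{eq: lip} to each $f_i$ with $x_a = z^{[k+1]}$ and $x_b = z^{[k]}$, and then sum over $i$:
\begin{equation*}\small
\sum_{i=1}^N f_i(z^{[k+1]}) \leq \sum_{i=1}^N f_i(z^{[k]}) + \sum_{i=1}^N g_i(z^{[k]})^\top (z^{[k+1]}-z^{[k]}) + \frac{NL}{2}\|z^{[k+1]}-z^{[k]}\|^2 .
\end{equation*}

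Next, we bound the linear term by Lemma~\ref{lemma}, which replaces it with $\alpha(\gamma^{[k]}-\gamma^{[k+1]}) - \frac{N\rho_2}{2}\|z^{[k+1]}-z^{[k]}\|^2$. We then add $\alpha\gamma^{[k+1]}$ to both sides and recall the definition \eqref{eq: energy} of $E$. Because $\alpha$ is fixed during the inner level, this yields
\begin{equation*}\small
E(z^{[k+1]}) \leq E(z^{[k]}) - \frac{N(\rho_2 - L)}{2}\|z^{[k+1]}-z^{[k]}\|^2 .
\end{equation*}
With $\rho_2 > L$, the last term is nonpositive. It is strictly negative whenever $z^{[k+1]} \neq z^{[k]}$.

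The main obstacle is the \emph{strict} inequality \eqref{eq: theorem 3}, which fails at a fixed point of \eqref{eq: stage 2 QP}. I would therefore state the claim for iterates that have not yet triggered the inner stopping test: while $\|z^{[k+1]}-z^{[k]}\| > \epsilon_{inner} \geq 0$, the decrease is at least $\frac{N(\rho_2-L)}{2}\epsilon_{inner}^2 > 0$. If $z^{[k+1]} = z^{[k]}$, the inner loop terminates, so strict decrease holds for every step actually executed.

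As a by-product, we can also bound $E$ from below. This holds because $\gamma^{[k]} \geq 0$ on the box $0 \leq z_d \leq 1$, and because $\sum_i f_i$ is bounded below if we assume this, for example on the relevant sublevel set. Summing the descent inequality over iterations then gives $\sum_k \|z^{[k+1]}-z^{[k]}\|^2 < \infty$. Hence the inner level terminates in finitely many steps for any $\epsilon_{inner} > 0$.

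The only real content lies in Lemma~\ref{lemma}, which is assumed. It should follow from the optimality (variational inequality) of the strongly convex QP \eqref{eq: stage 2 QP} tested against $z = z^{[k]}$, combined with the concavity of $\gamma$. Concavity gives $\gamma^{[k+1]} \leq \gamma^{[k]} + (\mathbf{1}-2z_d^{[k]})^\top(z_d^{[k+1]}-z_d^{[k]})$.
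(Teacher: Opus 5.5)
Your proposal is correct and is essentially the paper's own proof: sum the $L$-smoothness inequality \eqref{eq: lip} over the agents at the common point $z^{[k]}$, bound the linear term by Lemma~\ref{lemma}, and add $\alpha\gamma^{[k+1]}$ to get $E(z^{[k+1]})\le E(z^{[k]})-\frac{N(\rho_2-L)}{2}\|z^{[k+1]}-z^{[k]}\|^2$. Your caveats (strict decrease only when $z^{[k+1]}\neq z^{[k]}$, and lower boundedness of $E$ needing an extra assumption) correctly sharpen points the paper glosses over, since it simply asserts the quadratic term is negative and that $E$ is bounded below by the Stage~I value; the same care applies to your sketch of Lemma~\ref{lemma}, because testing the QP optimality against $z=z^{[k]}$ requires $0\le z_d^{[k]}\le 1$, which holds for every QP output but not automatically for the Stage~I point $z^{[1]}=\tilde z^*$.
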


\textbf{Proof.} See Appendix \ref{Appendix Theorem Stage 2}.  \hfill $\blacksquare$

\ADDBoYou{With the inner-loop convergence of Stage II secured by Theorem \ref{the: stage 2}, the role of the outer loop is to increase $\alpha$, a strategy that drives the entire algorithm toward a solution satisfying the Boolean constraint. Specifically, this increase is triggered once the change in the inner-loop variable z is below $\epsilon_{inner}$, a criterion which ensures that the inner-loop has sufficiently converged before tightening the constraint.}
%Since the inner level of Stage II in Algorithm \ref{alg: MIX-ALADIN first} guarantees convergence (Theorem  \ref{the: stage 2}, the role of the outer loop ), we now consider increasing $\alpha$ once the change in the inner level variable $z$ reaches the threshold $\epsilon_{inner}$. 
Following \cite[Theorem 1]{Hall2021} and \cite[Section 5]{ralph2004some}, it can be shown that as $\alpha$ gradually increases and becomes sufficiently large, the Boolean constraint on the variables will be satisfied and the outer level of Stage II will converge to a local optimum of problem \eqref{eq: mixed problem}.

\begin{remark} \label{the: reformulation to accelerate convergence rate} \textbf{\emph{Enhance Accelerated Convergence for Stage II:}}
To enhance the numerical convergence performance, analogous to the analysis in Theorem \ref{the: stage 2}, we consider the following reformulation of the quadratic program in \eqref{eq: stage 2 QP}, under the assumption that $(H_i + \rho_2 I) \succ L I$:

\begin{equation}\label{eq: stage 2 QP2}\small
    \begin{split}
        z^{[k+1]} = \arg\min_{z} \quad & \sum_{i=1}^N \frac{1}{2} (z - z^{[k]})^\top (H_i + \rho_2 I) (z - z^{[k]}) \\
        & + z^\top \sum_{i=1}^N g_i + \alpha (\mathbf{1} - 2z_{d}^{[k]})^\top z_{d} \\
        \text{s.t.} \quad & 0 \leq z_{d} \leq 1.
    \end{split}
\end{equation}

The convergence analysis for this reformulation follows that of Theorem \ref{the: stage 2} and is therefore omitted. This reformulation is designed to improve the convergence rate of the algorithm. \ADDBoYu{The acceleration achieved stems from the explicit use of second-order information, which provides a more accurate local model of the curvature, thus leading to faster convergence.}
\end{remark}

\section{Numerical Simulation and Results}\label{sec: numerical}
To validate the performance of our Mix-CALADIN algorithm for mixed-integer programs, numerical experiments are carried out in MATLAB R2025a on a PC equipped with an AMD Ryzen7-8745HS processor and 16GB of RAM.
{\color{black}{Based on the sensor localization problem in \cite{Du2025ACC}, we adapt the problem formulations to distributed consensus mixed-integer settings and conduct extensive numerical simulations on two representative cases to evaluate the performance of the proposed algorithm.}}

The first case studies a nonconvex sensor localization problem with mixed-Boolean constraints, designed to demonstrate the algorithm's capability in handling challenging nonconvex optimization landscapes. The problem is formally formulated as follows:
\begin{equation}\label{eq: mixed problem3}\small
    \begin{split}
        \min_{\substack{x_i,\, z,\, \\ \forall i \in \{1,\dots,N\}}} 
        &\quad \sum_{i=1}^N \bigg( \frac{1}{2} \|{y}_i - {\zeta}_i^{\alpha}\|^2 + \frac{1}{2}\|{b}_i - {\zeta}_i^{\beta}\|^2 \\
        &\quad + \frac{1}{2}\sum_{j=1}^{n_c}\left(\left({y}_i[j] - {b}_i[j]\right)^2 - {\zeta}_i^{\gamma}[j]\right)^2 \bigg) \\
        \text{s.t.}\hspace{0.5cm} &\quad x_i = z, \\
        &\quad x_i = [{y}_i^\top, {b}_i^\top]^\top, \\
        &\quad {y}_i \in \mathbb{R}^{n_c}, \quad {b}_i \in \{0,1\}^{n_d}.
    \end{split}
\end{equation}

The number of agents is $N=20$, the dimensions of the continuous and boolean parts of the local variable $x_i$ are $n_c=10$ and $n_d=10$, respectively. The parameter vectors ${\zeta}_i^{\alpha} \in \mathbb{R}^{n_{c}}$, ${\zeta}_i^{\beta} \in \mathbb{R}^{n_{d}}$, ${\zeta}_i^{\gamma} \in \mathbb{R}^{n_{d}}$ are drawn randomly from a standard normal distribution. Note that the element-wise operation $({y}_i - {b}_i)^2$ in the objective function requires $n_c = n_d$. Thus, the dimension of ${\zeta}_i^{\gamma}$ is $n_d$ (or $n_c$).

The second case studies a convex objective function, allowing for a performance benchmark against established distributed optimization methods, specifically the projection-based ADMM technique \cite{takapoui2020simple}. This approach does not rely on mixed-integer programming solvers; instead, it first employs a standard continuous variable solution method and then applies rounding. The parameter set ($N, n_c, n_d, \zeta_i^{\alpha}, \zeta_i^{\beta}$) is shared with the first problem, with the removal of the nonconvex term being the only difference. This problem takes the form: 
\begin{equation}\label{eq: mixed problem2}\small
    \begin{split}
        \min_{\substack{x_i,\, z,\, \\ \forall i \in \{1,\dots,N\}}} 
        &\quad \sum_{i=1}^N \bigg( \frac{1}{2}\|{y}_i - {\zeta}_i^{\alpha}\|^2 + \frac{1}{2}\|{b}_i - {\zeta}_i^{\beta}\|^2 \bigg) \\
        \text{s.t.}\hspace{0.5cm} &\quad x_i = z, \\
        &\quad x_i = [{y}_i^\top, {b}_i^\top]^\top, \\
        &\quad {y}_i \in \mathbb{R}^{n_c}, \quad {b}_i \in \{0,1\}^{n_d}.
    \end{split}
\end{equation}

For a fair comparison, the parameters of all algorithms are meticulously tuned. The penalty parameter $\rho_1$ for the augmented Lagrangian function \eqref{eq: local OPT}, common to both Stage I of Algorithm \ref{alg: MIX-ALADIN first} and projection-based ADMM, is set to $10^5$ for nonconvex problem and $10$ for convex problem. The penalty parameter $\rho_2$ for the QP function \eqref{eq: stage 2 QP} is set to $10^5$ for nonconvex problem and $10$ for convex problem.

Fig.~\ref{fig: stage1} illustrates the convergence behavior of the global variable $z$ in Stage I of Algorithm \ref{alg: MIX-ALADIN first} applied to both nonconvex and convex optimization problems. The vertical axis shows the residual norm $\|z^{[k]} - \tilde z^{*}\|^2$ on a logarithmic scale, while the horizontal axis tracks the iteration count up to $200$. Both curves exhibit linear convergence rates, which aligns well with the theoretical findings presented in Theorem 1 and Theorem 2. 
\ADDBoYou{It is worth noting that the two curves do not start from the same point since the underlying problems are fundamentally different, with one being convex and the other nonconvex, consequently leading to distinct optimal solutions $\tilde{z}^*$}. 
Stage I facilitates effective convergence across both convex and nonconvex problems of the form \eqref{eq: mixed problem} by relaxing the mixed-integer variables into a continuous space. This performance aligns with the theoretical expectations established in Theorems \ref{the: convex} and \ref{the: nonconvex}, respectively. These results provide a reference point via relaxation, establishing a lower bound for the subsequent Stage II of our algorithm.

%{\color{blue}{In Stage I, the algorithm effectively solves the relaxed optimization problem \eqref{eq: mixed problem}, in which the mixed-integer constraints have been relaxed. Convergence is achieved for both convex and nonconvex formulations: the convex case exhibits rapid convergence, whereas the nonconvex case converges more slowly, which aligns with the theoretical expectations for these two problem classes, as supported by Theorem \ref{the: convex} and Theorem \ref{the: nonconvex}. These results, obtained via constraint relaxation, provide a reference lower bound for the subsequent Stage II of the algorithm.}}
\begin{figure}[ht]
	\centering
\includegraphics[width=0.4\textwidth,height=0.2\textheight]{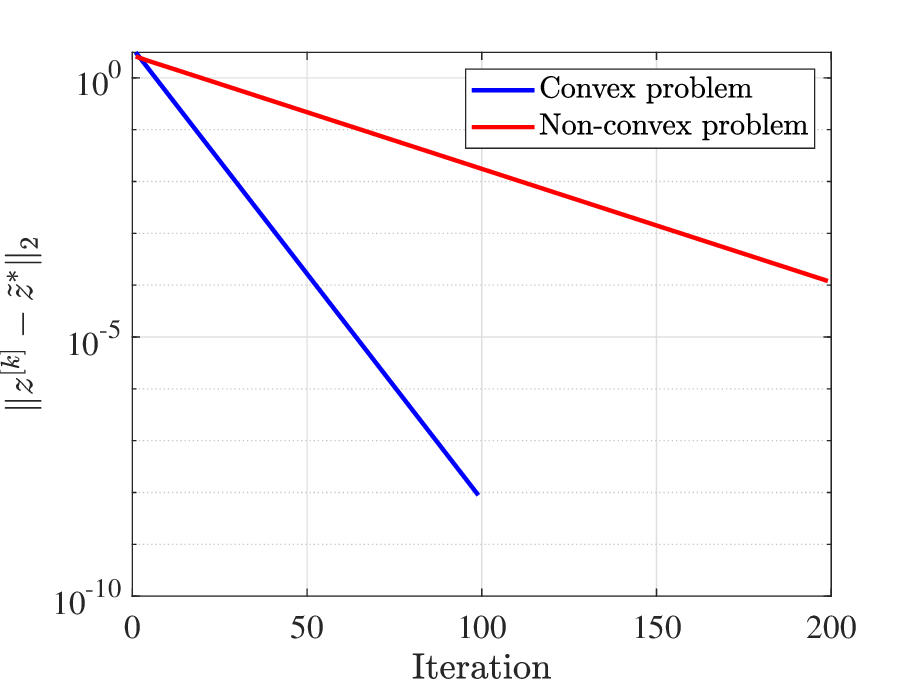}
	\caption{A comparative analysis of Stage I performance in Algorithm \ref{alg: MIX-ALADIN first} is conducted for both convex and nonconvex problems, with parameter settings $\rho_1 = 10$ for convex cases \eqref{eq: mixed problem2} and $\rho_1 = 10^5$ for nonconvex cases \eqref{eq: mixed problem3}.}
	\label{fig: stage1}
\end{figure}

Fig.~\ref{fig: stage2} illustrates the convergence behavior of the global variable $z$ in Stage II of Algorithm \ref{alg: MIX-ALADIN first} for nonconvex and convex optimization problems. The results demonstrate that the algorithm effectively converges while the residual is toward zero in both cases, with the convex problem converging after $199$ iterations and the nonconvex problem after $237$ iterations. 

\begin{figure}[ht]
	\centering
\includegraphics[width=0.4\textwidth,height=0.2\textheight]{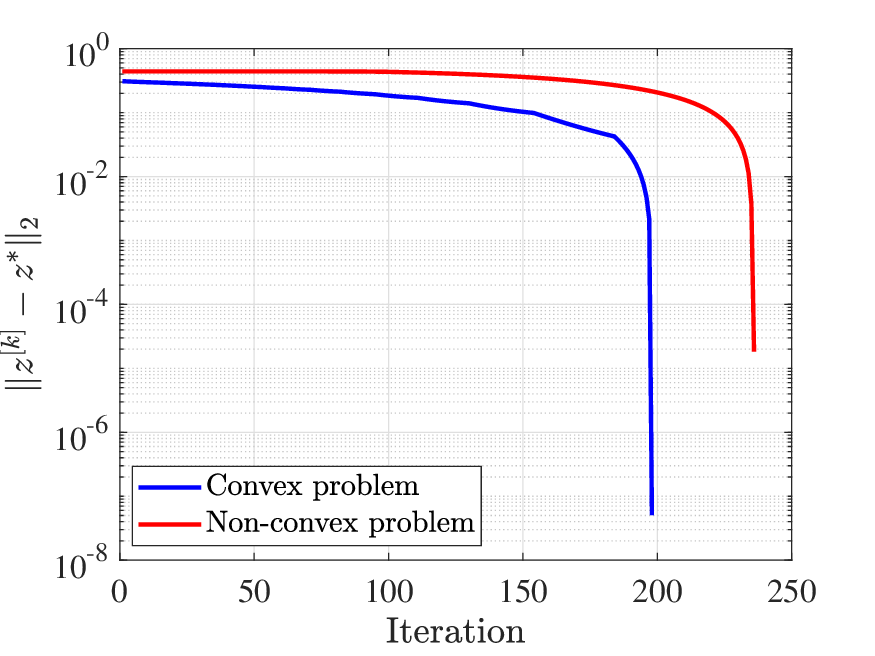}
	\caption{A comparative analysis of Stage II performance in Algorithm \ref{alg: MIX-ALADIN first} is conducted for both convex and nonconvex problems, with parameter settings $\rho_2 = 10$ for the convex case \eqref{eq: mixed problem2} and $\rho_2 = 10^5$ for the nonconvex cases \eqref{eq: mixed problem3}.}
	\label{fig: stage2}
\end{figure}

Fig.~\ref{fig: Convex energy function value} and Fig.~\ref{fig: Nonconvex energy function value} present the energy function \eqref{eq: energy} profiles throughout the optimization process for the convex and nonconvex problems, respectively. \ADDBoYu{In Fig.~\ref{fig: Convex energy function value}}, the distinctive staircases pattern is observed in the convex case, characterized by $20$ abrupt increases followed by gradual declines. This behavior is attributed to the outer-loop updates of the parameter $\alpha$: each increase in $\alpha$ temporarily elevates the energy function value, after which the inner-loop optimization reduces it toward optimality. This process terminates in satisfaction of the Boolean constraint. \ADDBoYu{In  Fig.~\ref{fig: Nonconvex energy function value},} the nonconvex problem shows more frequent rises in the energy function value within the first $102$ iterations, indicating frequent exits from the inner-loop due to insufficient $\alpha$ values. Once $\alpha$ becomes sufficiently large after $102$ iterations, the energy function decreases monotonically, converging near its initial value by iteration $237$ and satisfying the Boolean constraint.

\begin{figure}[ht]
	\centering
\includegraphics[width=0.4\textwidth,height=0.2\textheight]{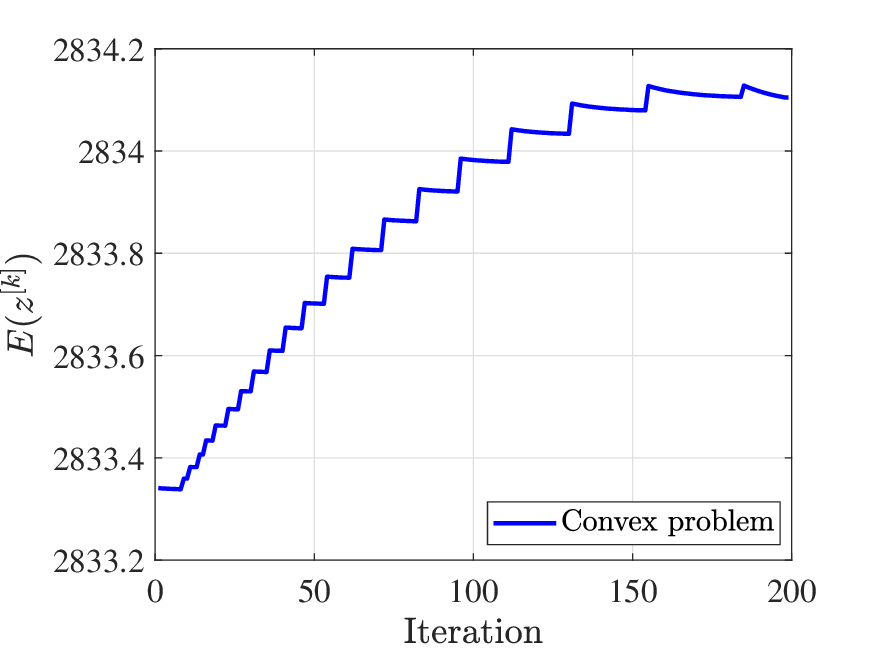}
	\caption{Evolution of the energy function $E(z^{[k]})$ (see \eqref{eq: energy}) during Stage II of Algorithm \ref{alg: MIX-ALADIN first} for the convex problem \eqref{eq: mixed problem2}.}
	\label{fig: Convex energy function value}
\end{figure}

\begin{figure}[ht]
	\centering
\includegraphics[width=0.4\textwidth,height=0.2\textheight]{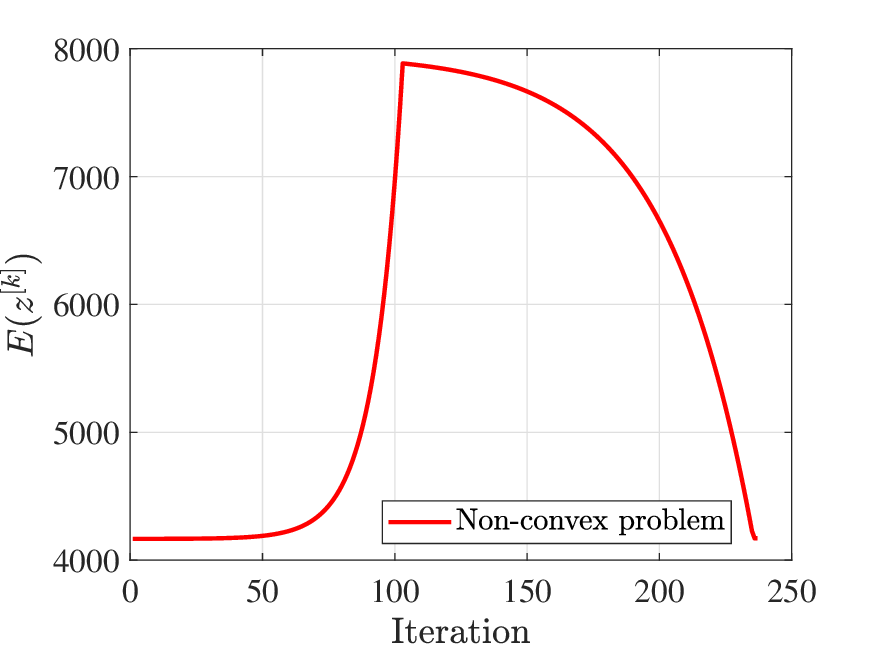}
	\caption{Evolution of the energy function $E(z^{[k]})$  during Stage II of Algorithm \ref{alg: MIX-ALADIN first} for the nonconvex problem \eqref{eq: mixed problem3}.}
	\label{fig: Nonconvex energy function value}
\end{figure}

Figure~\ref{fig: Comparison} shows the comparative performance between the projection-based ADMM \cite{takapoui2020simple} and Algorithm \ref{alg: MIX-ALADIN first} in a convex optimization problem. It is important to note that ADMM is not generally applicable to problems containing nonconvex terms. Furthermore, the projection-based method serves as a heuristic baseline and lacks convergence theory guarantees. As shown by the blue curve, the projection-based ADMM method converges to a local minimum. In contrast, Algorithm \ref{alg: MIX-ALADIN first} (red curve) proceeds in two distinct \ADDBoYu{stages}: in Stage I, it rapidly converges to the relaxed solution of the problem with continuous relaxation for the Boolean constraint, providing a high-quality initial point for Stage II. The introduction of exact Boolean constraint in Stage II leads to a temporary increase in the objective value, reflecting the formulation's shift towards feasibility. After 199 iterations, Algorithm \ref{alg: MIX-ALADIN first} converges to a solution that is superior to that attained by the projection-based ADMM approach, demonstrating both improved performance and more rigorous convergence behavior with theoretical guarantee.

\begin{figure}[ht]
	\centering
\includegraphics[width=0.5\textwidth,height=0.29\textheight]{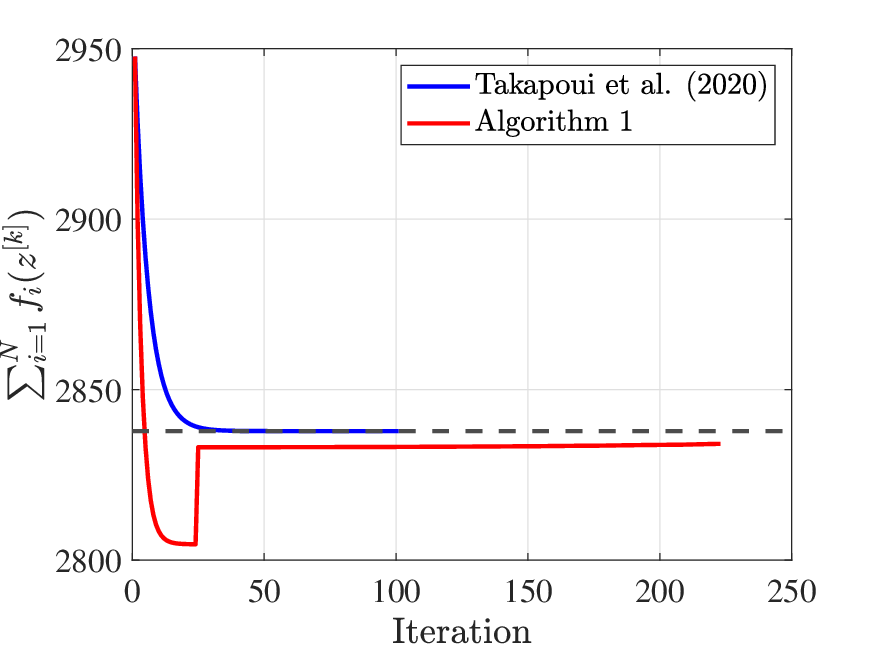}
	\caption{Convergence comparison of Algorithm \ref{alg: MIX-ALADIN first} and \cite{takapoui2020simple}.}
	\label{fig: Comparison}
\end{figure}

In summary, the simulation results consistently demonstrate that the proposed two-stage algorithm is capable of solving both mixed-integer convex and nonconvex optimization problems. The algorithm exhibits  rapid convergence, high-quality solution and strong robustness. The efficiency of the algorithm hinges on its two-stage architecture. The first stage acts as an efficient initialization step, which relaxes the Boolean constraint into continuous one and applies the well-established CALADIN algorithm to swiftly obtain a high-quality solution, serving as a superior initial point for Stage II. Then Stage II demonstrates a clear process of refinement for both convex and nonconvex problems.  
%\ADDBoYou{Its inner-loop efficiently solves the optimization subproblem for a given $\alpha$, while the outer-loop ensures strict Boolean constraint satisfaction by adaptively adjusting the parameter $\alpha$, thereby driving the algorithm toward a feasible and optimal solution, with the Boolean constraint fully satisfied upon convergence}. 
{\color{black}{The algorithm's inner loop solves the subproblem for a given $\alpha$, while the outer loop adjusts $\alpha$ to enforce Boolean constraints, ensuring convergence to a feasible solution}}. A key strength of Algorithm \ref{alg: MIX-ALADIN first} is its theoretical convergence guarantee, a foundation advantage over heuristic-based approaches like the projection-based ADMM used for comparison, which lack such theoretical underpinning. \ADDBoYu{The direct comparative results demonstrate not only the effectiveness of Algorithm \ref{alg: MIX-ALADIN first} but also its superiority in the quality of the final solution.}

\section{Conclusion}
In this paper, we presented a novel distributed algorithm for consensus mixed-integer optimization named Mix-CALADIN. Our algorithm efficiently handles both convex and nonconvex optimization problems while eliminating the dependency on conventional mixed-integer solvers. Numerical simulations validate the algorithm's performance and demonstrate its advantages over existing approaches in the literature. 

Future work will focus on extending the proposed techniques to resource-allocation-based ALADIN variants and other distributed algorithms, as well as establishing the convergence rate analysis for Stage II of Mix-CALADIN. Additionally, we plan to explore the extension of our method to open network settings, where the framework structure may change over time.

\appendix
\section{Proof of Lemma \ref{lemma}}\label{Appenmdix Lemma1}

We begin by introducing two auxiliary functions:
\begin{equation}\label{eq: aux1}\small
    \begin{split}
        &\phi^{[k]}(z) \coloneqq \frac{N\rho_2}{2} \|z - z^{[k]}\|^2 \\
        &+ \sum_{i=1}^N g_i(z^{[k]})^\top (z - z^{[k]}) + \alpha (1 - z_d)^\top z_d,
    \end{split}
\end{equation}
\begin{equation}\label{eq: aux2}\small
    \begin{split}
        &\psi^{[k]}(z) \coloneqq \frac{N\rho_2}{2} \|z - z^{[k]}\|^2 \\
        &+ \sum_{i=1}^N g_i(z^{[k]})^\top (z - z^{[k]}) + \alpha (1 - 2z_d^{[k]})^\top z_d,
    \end{split}
\end{equation}
with $0 \leq z_d \leq 1$.

Note that $\nabla \phi^{[k]}(z^{[k]})^\top (z^{[k+1]} - z^{[k]}) = \nabla \psi^{[k]}(z^{[k]})^\top (z^{[k+1]} - z^{[k]})$. Since $\psi^{[k]}$ is convex, we have $\nabla \psi^{[k]}(z^{[k]})^\top (z^{[k+1]} - z^{[k]}) \leq 0,$
which implies $\nabla \phi^{[k]}(z^{[k]})^\top (z^{[k+1]} - z^{[k]}) \leq 0$ (see also \cite[Lemma 2]{Hall2021}).

From the optimality condition of the QP in \eqref{eq: stage 2 QP}, we obtain:
\begin{equation}\label{eq: important 2}\small
    \begin{split}
       & \frac{N\rho_2}{2} \|z^{[k+1]} - z^{[k]}\|^2 \\
       &+ (z^{[k+1]} - z^{[k]})^\top \sum_{i=1}^N g_i(z^{[k]}) + \alpha \gamma^{[k+1]} \leq \alpha \gamma^{[k]}.
    \end{split}
\end{equation}
Rearranging terms yields \eqref{eq: lemma}. 

\section{Proof Theorem \ref{the: stage 2}}\label{Appendix Theorem Stage 2}
Since Assumption \ref{ass: Lip} holds for all $f_i$, we have:
\begin{equation}\label{eq: lip2}\small
    \begin{split}
        \sum_{i=1}^N f_i(z^{[k+1]}) \overset{\eqref{eq: lip}}{\leq} & \sum_{i=1}^N f_i(z^{[k]}) + g_i(z^{[k]})^\top (z^{[k+1]} - z^{[k]}) \\
        & + \frac{NL}{2} \|z^{[k+1]} - z^{[k]}\|^2.
    \end{split}
\end{equation}

Combining \eqref{eq: lip2} with Lemma \ref{lemma} yields:
\begin{equation}\label{eq: stage 2}\small
    \begin{aligned}
        \sum_{i=1}^N f_i(z^{[k+1]}) \leq & \sum_{i=1}^N f_i(z^{[k]}) + \frac{N(L - \rho_2)}{2} \|z^{[k+1]} - z^{[k]}\|^2 \\
        & + \alpha (\gamma^{[k]} - \gamma^{[k+1]}).
    \end{aligned}
\end{equation}

For $\rho_2 > L$, we have $\frac{N(L - \rho_2)}{2} \|z^{[k+1]} - z^{[k]}\|^2 < 0$, which implies:
\begin{equation}\small
\begin{split}
     E(z^{[k+1]}) = &\sum_{i=1}^N f_i(z^{[k+1]}) + \alpha \gamma^{[k+1]} \\
    \leq &E(z^{[k]}) + \frac{N(L - \rho_2)}{2} \|z^{[k+1]} - z^{[k]}\|^2.
\end{split}
\end{equation}
Therefore, $E(z^{[k+1]}) - E(z^{[k]}) < 0$. Since $E(\cdot)$ is bounded below by the optimal solution from Stage I, the convergence of Stage II follows. \hfill 

\bibliography{ifacconf}             % bib file to produce the bibliography
                                                     % with bibtex (preferred)
                                                   
%\begin{thebibliography}{xx}  % you can also add the bibliography by hand

%\bibitem[Able(1956)]{Abl:56}
%B.C. Able.
%\newblock Nucleic acid content of microscope.
%\newblock \emph{Nature}, 135:\penalty0 7--9, 1956.

%\bibitem[Able et~al.(1954)Able, Tagg, and Rush]{AbTaRu:54}
%B.C. Able, R.A. Tagg, and M.~Rush.
%\newblock Enzyme-catalyzed cellular transanimations.
%\newblock In A.F. Round, editor, \emph{Advances in Enzymology}, volume~2, pages
%  125--247. Academic Press, New York, 3rd edition, 1954.

%\bibitem[Keohane(1958)]{Keo:58}
%R.~Keohane.
%\newblock \emph{Power and Interdependence: World Politics in Transitions}.
%\newblock Little, Brown \& Co., Boston, 1958.

%\bibitem[Powers(1985)]{Pow:85}
%T.~Powers.
%\newblock Is there a way out?
%\newblock \emph{Harpers}, pages 35--47, June 1985.

%\bibitem[Soukhanov(1992)]{Heritage:92}
%A.~H. Soukhanov, editor.
%\newblock \emph{{The American Heritage. Dictionary of the American Language}}.
%\newblock Houghton Mifflin Company, 1992.

%\end{thebibliography}

%\appendix
%\section{A summary of Latin grammar}    % Each appendix must have a short title.
%\section{Some Latin vocabulary}              % Sections and subsections are supported  
                                                                         % in the appendices.
\end{document}